\DeclareMathOperator{\score}{score}
\DeclareMathOperator{\colve}{\operatorname{col_{ve}}}
\newtheorem{theorem}{Theorem}[section]
\newtheorem{corollary}[theorem]{Corollary}
\newtheorem{conj}[theorem]{Conjecture}
\theoremstyle{definition}
\newtheorem{definition}[theorem]{Definition}
\theoremstyle{remark}
\newtheorem{remark}[theorem]{Remark}
\tikzset{filled/.style={minimum width=5pt,inner sep=0pt,circle,fill=black}}
\tikzset{empty/.style={minimum width=5pt,inner sep=0pt,circle,fill=white, thick, draw=black}}
\tikzset{d/.style={minimum width=5pt,inner sep=0pt,circle,fill=black}}
\tikzset{dw/.style={minimum width=0,inner sep=0,circle,fill=black}}
\newcommand{\tikzAngleOfLine}{\tikz@AngleOfLine}
  \def\tikz@AngleOfLine(#1)(#2)#3{%
  \pgfmathanglebetweenpoints{%
    \pgfpointanchor{#1}{center}}{%
    \pgfpointanchor{#2}{center}}
  \pgfmathsetmacro{#3}{\pgfmathresult}%
  }
\numberwithin{equation}{section}
\title[Vertex Edge Marking Triangular Lattices]{Vertex-edge marking score of certain triangular lattices}
\author[]{Daniel Herden, Jonathan Meddaugh, Mark Sepanski, Isaac Echols, Nina Garcia-Montoya, Cordell Hammon, Guanjie Huang, Adam Kraus, Jorge Marchena Menendez, Jasmin Mohn, Rafael Morales Jiménez}
\address{
Department of Mathematics,
Baylor University,
Sid Richardson Building,
1410 S.4th Street,
Waco, TX 76706, USA}
\email{Daniel\_Herden@baylor.edu, Jonathan\_Meddaugh@baylor.edu, Mark\_Sepanski@baylor.edu}
\begin{document}
	

\keywords{vertex-edge marking game, coloring game, lattice, triangular lattice}
\subjclass[2020]{05C15, 05C57}

\begin{abstract}
	The vertex-edge marking game is played between two players on a graph, $G=(V,E)$, with one player marking vertices and the other marking edges. The players want to minimize/maximize, respectively, the number of marked edges incident to an unmarked vertex.
	The vertex-edge coloring number for $G$ is the maximum score achievable with perfect play.
	Bre\v{s}ar et al., \cite{bresar}, give an upper bound of $5$ for the vertex-edge coloring number for finite planar graphs. It is not known whether the bound is tight. In this paper, in response to questions in \cite{bresar}, we show that the vertex-edge coloring number for the infinite regular triangularization of the plane is 4. We also give two general techniques that allow us to calculate the vertex-edge coloring number in many related triangularizations of the plane.
\end{abstract}

\maketitle
\tableofcontents

\section{Introduction}

Combinatorial questions regarding colorings of maps and graphs go back to the 19th century, where Francis and Frederick Guthrie, under the advisement of De Morgan \cite{mckay}, posed the four-color conjecture. It was not until over a century later, in 1976, that a computer-assisted proof was presented by Appel and Haken \cite{suffice}.
Since that time, many variations of coloring problems have arisen. In particular, in 1981, Brams and Gardner \cite[Chapter 16, p. 253]{gardner} posed a coloring game on maps as a dynamical version of the map-coloring problem. As we neared the 21st century, Bodlaender \cite{complexity}, along with a number of other authors, began investigating similar coloring games. As introduced by Bartnicki et al. \cite{bartniki}, the \emph{vertex-edge marking game} is one of the many different coloring games that can be played on graphs.

The vertex-edge marking game involves two competing players: Alice marks vertices and Bob marks edges. Bob seeks to surround any unmarked vertex with as many marked edges as possible. Alice is competing against Bob and tries to limit the number of marked edges incident to any unmarked vertex. Starting with Alice, players alternate turns. On any turn of the game, the \emph{vertex score} at a vertex $v\in V$ is the number of marked edges incident to $v$ if $v$ is unmarked and $0$ otherwise. The \emph{final score} of the game is the maximum over all turns and vertices of the vertex score. For a given graph $G=(V,E)$, $1$ plus the final score of a game in which Alice and Bob each play optimally is called the \emph{vertex-edge coloring number} of $G$, denoted $\colve(G)$.

Bres\v{a}r et al. \cite{bresar} investigated many properties of the vertex-edge coloring number, including determining the upper bound $\colve(G) \le 5$ for all finite planar graphs. Multiple finite and infinite planar graphs have been found with $\colve(G) = 4$, but it remains unknown whether any exist with $\colve(G) = 5$.

Bres\v{a}r et al. \cite{bresar} expressed hope that possibly the infinte regular triangular lattice $\mathcal{T}$ might give an example of a graph $G$ with $\colve(G) = 5$. However, in Corollary \ref{Trianlge Lattice Theorem}, we show that $\colve (\mathcal T) = 4$. This follows from a more general result to bound $\colve(G)\le 4$ in Theorem \ref{Main Theorem} that depends on 2-colorability and angle markings. This result is applied to other triangular tilings of the plane in Corollaries \ref{Square Lattice Theorem} and \ref{subway tiling theorem}. A further technique for bootstrapping the calculation of $\colve(G)$ from a subgraph is given in Theorem \ref{graph extension theorem} and applied to graphs in Corollaries \ref{G'} and \ref{Extra Lines Theorem}. Section \ref{sec: further} ends with a technique to facilitate establishing higher lower bounds for $\colve(G)$ and some conjectures.

\section{Initial Definitions and Results} \label{sec: init defs}

The \textit{vertex-edge marking game} is played on a graph $G=(V,E)$ by two players who alternate turns each round of the game. At the beginning of the game, nothing on the graph is marked. The first player, known as Alice, marks vertices on her turn. The second player, known as Bob, marks edges on his turn. Alice's goal is to minimize the maximum number of marked edges adjacent to an unmarked vertex. Bob's goal is to maximize the maximum number of marked edges adjacent to an unmarked vertex.

More precisely, a game, $\mathcal G$, of the vertex-edge marking game played on the graph $G=(V,E)$ consists of a series of rounds, starting with round $r=1$, in which Alice and Bob each take a turn with Alice always going first. Alice always marks one of the remaining unmarked vertices and Bob always marks one of the remaining unmarked edges. At the end of round $r$, $r\geq 1$, write $\operatorname{MV}(r)$ and $\operatorname{UMV}(r)$ for the set of marked and unmarked vertices of $G$, respectively, and write $\operatorname{ME}(r)$ for the set of marked edges of $G$. For finite graphs, the game is played until either player runs out of moves, i.e., until either all vertices or all edges are marked. For an infinite graph, the game continues forever.

For $v\in V$, the \textit{vertex score} of $v$ after round $r$ is
\begin{equation*}
	\score(v, r) = \begin{cases}
		0 & \text{if } v \in \operatorname{MV}(r),\\
		|\{e \in \operatorname{ME}(r) : e \text{ is incident to } v\}|
							& \text{if } v \in \operatorname{UMV}(r).
	\end{cases}
\end{equation*}
The $r$\textit{-round score} after round $r$ is
\[ \score(r) = \sup \{\score(v,r) : v \in V\}.\]
The \textit{final score} of the game $\mathcal G$ is
\[ \score(\mathcal G) = \sup \{ \score(r) : \text{all rounds } r \}.\]

We say that Bob has a \textit{winning strategy} for the score $s$ if, regardless of Alice's moves each turn in a game $\mathcal G$, Bob can choose his moves to force $\score(\mathcal G) \geq s$. Finally,
the \textit{vertex-edge coloring number} of $G$ is
\[ \colve(G) = \sup \{ s : \text{Bob has a winning strategy for the score } s \} +1.\]
Note that, as long as $G$ has an edge, $\colve(G) \geq 2$. Clearly, $\colve(G) \leq \Delta(G) + 1$ where $\Delta(G)$ is the maximum degree of a vertex.

If $H$ is a subgraph of $G$, then \cite[Lemma 1]{bresar}
\begin{equation}\label{eqn:HinG}
	\colve(H) \leq \colve(G).
\end{equation}

A graph, $G$, has a $d$\textit{-bounded orientation} if the edges can be oriented so that each vertex has a maximal out-degree of $d$. In such a case, it is known \cite[Lemma 3]{bresar} that
\begin{equation}\label{eqn:outdegreebound}
	\colve(G) \leq d+2.
\end{equation}
From this it follows, \cite[Proposition 6]{bresar}, that for every finite planar graph, $G$,
\begin{equation}\label{eqn:planarleq5}
	\colve(G) \leq 5.
\end{equation}

In a vertex-edge marking game, $\mathcal G$, on a graph $G$, a  \textit{free path} is a path $P$ of $G$ with vertex sequence $v_0, \ldots, v_k$ with $k \geq 2$ so that
\begin{itemize}
	\item the first and last edges of $P$, $v_0v_1$ and $v_{k-1}v_k$, are marked,
	\item the interior vertices, $v_1,\ldots,v_{k-1}$, are not marked, and
	\item each interior vertex is incident to at least one edge not in $P$.
\end{itemize}
If there is a round in which it is Bob's turn and there is a free path in the game, then, \cite[Lemma 7]{bresar}, Bob has a strategy to force
\begin{equation}\label{freepath}
	\score(\mathcal G) \geq 3.
\end{equation}

\section{The Triangular Lattice}\label{sec: main results}

Write $\mathcal T$ for the infinite regular triangular lattice in the plane. In \cite[Question 3]{bresar}, the question was raised whether
$\colve(\mathcal T)$ is $4$ or $5$. In this section, we show that the answer is $\colve(\mathcal T) =4$.

\begin{theorem}\label{Main Theorem}
	Let $\mathcal L$ be a planar graph in the plane so that
	\begin{itemize}
		\item the bounded faces of $\mathcal L$ are $2$-colorable, gray and white,
		\item all gray faces of $\mathcal L$ are triangles,
        \item every edge of $\mathcal L$ belongs to precisely one gray triangle,
		\item exactly one angle from each gray triangle is marked, and
		\item each vertex of $\mathcal L$ is incident to at most two unmarked angles in gray triangles.
	\end{itemize}
Then \[ \colve(\mathcal{L}) \leq 4. \]
\end{theorem}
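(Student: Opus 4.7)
The plan is to exhibit an explicit strategy for Alice which keeps the number of marked edges incident to any unmarked vertex at most $3$ at every round, from which $\colve(\mathcal{L})\le 4$ follows directly from the definition.

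Alice's strategy uses a primary rule and a backup rule. Her primary response to Bob's marking of an edge $e$ in the (unique) gray triangle $T$ containing $e$ is to mark the vertex $w_T \in T$ at which $T$'s marked angle is located, whenever $w_T$ is still unmarked. When $w_T$ has already been marked in a previous round, Alice uses her turn as a \emph{free move}, marking an unmarked vertex that currently bears the largest number of marked incident edges (breaking ties by any fixed rule). Alice's very first move, which has no preceding Bob move to respond to, is discretionary and can be spent on any vertex.

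The first step in the analysis is a structural observation. If $v$ is unmarked and $e$ is a marked edge incident to $v$ that Alice has already responded to, then in the gray triangle $T$ containing $e$ one must have $w_T \neq v$, since otherwise Alice's primary response would have marked $v$, contradicting $v$ being unmarked. By the hypothesis that $v$ has at most two unmarked angles in gray triangles, all such marked edges incident to $v$ are confined to at most two specific gray triangles containing $v$, each contributing at most $2$ edges at $v$. Together with Bob's most recent (as yet unanswered) edge, this preliminarily bounds the number of marked incident edges at $v$ by $4$.

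The delicate remaining step --- and the main obstacle --- is to sharpen the preliminary bound from $4$ down to $3$ by using Alice's free moves. The idea is that in order for $v$ to actually acquire a $4$-th marked incident edge, Bob must mark both edges in each of $v$'s two unmarked-angle gray triangles, and every second edge Bob marks in a given gray triangle generates a free move for Alice (since $w_T$ is already marked after the first). A careful timing argument, organized by case on how Bob orders his edges within and between the two unmarked-angle triangles at $v$, shows that by the round immediately after $v$'s incident-edge count first reaches $3$, Alice has a free move at her disposal and $v$ is a highest-scoring unmarked vertex, so the backup rule marks $v$ and caps its score at $3$. The same accounting has to cover the case in which Bob spreads threats across several vertices simultaneously, using decoys or interleavings to delay Alice's response; the crux is to verify that the rate at which Alice accrues free moves from the attacked triangles keeps pace with the rate at which she must mark threatened vertices. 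Once this accounting is carried out, no unmarked vertex ever bears more than $3$ marked incident edges at any round, and so $\colve(\mathcal{L}) \le 4$.
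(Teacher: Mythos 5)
There is a genuine gap here, and it is twofold. First, the entire substance of the argument is deferred: the ``careful timing argument'' and the ``accounting'' of free moves against threatened vertices are described as things that \emph{have to} be verified and asserted to work ``once carried out,'' but they are never carried out. Since the whole difficulty of the theorem lives exactly there, what you have is a plan, not a proof. (A small symptom: even your preliminary bound is off --- two unmarked-angle triangles contribute up to $4$ answered edges at $v$, so with the unanswered edge the crude bound is $5$, not $4$.) Second, and more seriously, the strategy itself appears to be beatable, so the missing accounting cannot be completed as stated. The weakness is the combination of a rigid primary rule with a purely score-greedy backup rule. Take $\mathcal T$ with gray triangles marked at their apex. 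Bob first builds $k$ well-separated decoy vertices $p$, giving each two marked edges lying in two \emph{different} gray triangles at $p$, each edge being the first marked edge of its triangle: your primary rule forces Alice to mark the two apexes, grants her no free moves, and leaves every $p$ unmarked with two marked incident edges. Bob then attacks a far-away target $v$ by marking both edges at $v$ of one of the two gray triangles whose angle at $v$ is unmarked. The first is answered by a forced apex mark; the second grants Alice a free move, but at that moment $v$ is merely tied at score $2$ with the $k$ decoys, and a fixed tie-breaking rule that looks only at scores can be arranged (by Bob's choice of where to place decoys relative to the rule) to spare $v$. Bob now marks an edge at $v$ in the \emph{other} unmarked-angle triangle at $v$: it is the first marked edge of that triangle, so your primary rule forces Alice to mark its apex, and $v$ sits unmarked with three marked incident edges after Alice's move. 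One more edge at $v$ on Bob's next turn makes the score $4$.

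The paper's strategy differs from yours precisely where yours breaks. When Bob marks the second edge of a gray triangle $T$, the paper has Alice mark the common endpoint of the two marked edges of $T$ (and, failing that, another unmarked vertex of $T$); when he marks the third, she marks the last unmarked vertex of $T$. These local rules guarantee that no vertex can ever sit unmarked with two marked edges inside a single unmarked-angle triangle --- which is exactly the configuration exploited above, since such a vertex can be pushed from $2$ to $3$ by a move whose forced response lies elsewhere. A backup rule that only inspects current scores cannot distinguish these urgent vertices from decoys with the same score. To salvage your approach, the backup move must be localized to the triangle Bob just played in; doing so essentially recovers the paper's rules R1--R3, whose short case analysis shows that every unmarked vertex reaching three marked edges is the unique vertex Alice is about to mark.
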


\begin{remark}
See Figures \ref{Trinagle Lattice Pic}, \ref{Square Lattice}, and \ref{Subway Tiling} for examples of infinite graphs that satisfy the conditions imposed on $\mathcal L$ here. Note that the white faces are not required to be triangles.
\end{remark}

\begin{proof}
Note that each edge of $\mathcal{L}$ belongs to exactly one gray triangle that we will call the \emph{corresponding triangle}. Therefore, as Bob marks edges, each marked edge is either the first, second, or third marked edge of its corresponding triangle.

To prove this theorem, we construct a strategy for Alice that only allows Bob to obtain a score of at most $3$ on any vertex. Alice begins by marking any vertex on her first turn. Alice's subsequent plays are determined by the edge marked by Bob in the previous round and whether it was the first, second, or third marked edge of its corresponding triangle, $T$. The rules are as follows.

\begin{itemize}
    \item[{\bf R1:}] If Bob marked the first edge of $T$, Alice marks the vertex incident to the marked angle in $T$, if the vertex is unmarked.

    If that vertex is already marked, Alice marks any other unmarked vertex in $T$, if one exists.

    Otherwise, Alice marks any unmarked vertex in $\mathcal{L}$.

    \item [{\bf R2:}] If Bob marked the second edge of $T$, Alice marks the vertex incident to both marked edges of $T$, if it is unmarked.

    If that vertex is already marked, Alice marks any other unmarked vertex in $T$, if one exists.

    Otherwise, Alice marks any unmarked vertex in $\mathcal{L}$.

    \item[{\bf R3:}] If Bob marked the third edge of $T$, Alice marks the remaining unmarked vertex in $T$, if one exists.

    Otherwise, Alice marks any unmarked vertex in $\mathcal{L}$.

\end{itemize}

We now show that this strategy prevents Bob from ever getting a score of $4$ or more on a vertex. We do this by showing that every vertex incident to (exactly) three marked edges at the end of Bob's turn is already marked or will be marked by Alice at the start of the next round.

Suppose $v$ is an unmarked vertex incident to three marked edges at the end of a round (after Bob's turn). In the order that Bob played them, label these three edges $e_1, e_2, e_3$.

Consider first the case where there is an $e_i$ whose corresponding gray triangle, $S$, has a marked angle incident to $v$. As $v$ is unmarked, rule R1 implies that $i=3$, that Bob just played $e_3$, and that Alice will mark $v$ on her next turn.

We may now assume that all marked edges belong to corresponding gray triangles whose marked vertices are not incident to~$v$. As there are at most two gray triangles with unmarked angles incident to $v$, at least two of the marked edges belong to the same corresponding triangle, $S$. If all three edges of $S$ are marked after Bob's turn, then rule R3 implies that $e_3$ is an edge of $S$, that Bob just played $e_3$, and that Alice will mark $v$ on her next turn. However, if the third edge of $S$ is unmarked, then rule R2 implies that $e_3$ is an edge of $S$, that Bob just played $e_3$, and that Alice will mark $v$ on her next turn.
\end{proof}

\begin{corollary}\label{Trianlge Lattice Theorem}
    Let $\mathcal{T}$ be the infinite regular triangular lattice in the plane. Then \[ \colve(\mathcal{T}) = 4. \]
\end{corollary}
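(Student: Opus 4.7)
\textit{Proof plan.}
The plan is to establish the two bounds $\colve(\mathcal{T}) \le 4$ and $\colve(\mathcal{T}) \ge 4$ separately: the upper bound is a direct application of Theorem \ref{Main Theorem}, and the lower bound is a short free-path construction that invokes \eqref{freepath}.

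For the upper bound, I would verify the five hypotheses of Theorem \ref{Main Theorem} on $\mathcal{T}$ using the standard 2-coloring of its triangular faces by orientation: color every upward-pointing triangle gray and every downward-pointing triangle white. Every edge of $\mathcal{T}$ separates exactly one upward from one downward triangle, so each edge belongs to exactly one gray triangle. In each gray (upward) triangle, mark the angle at its top vertex. Since every vertex of $\mathcal{T}$ is the top vertex of precisely one upward triangle, each vertex is incident to exactly one marked gray angle among the three gray triangles meeting it, and therefore to exactly two unmarked gray angles. All conditions of Theorem \ref{Main Theorem} are met, and $\colve(\mathcal{T}) \le 4$ follows.

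For the lower bound, I would describe a two-move scheme by which Bob produces a free path. After Alice's opening mark $a_1$, Bob picks an edge $uv$ of $\mathcal{T}$ with both endpoints unmarked and chosen far from $a_1$, and marks it. After Alice's next move $a_2$, at most one of $u$ and $v$ has been marked, so---say---$u$ remains unmarked; since $u$ has degree $6$ in $\mathcal{T}$ and only one of its incident edges has been marked so far, Bob can choose an unmarked edge $uw$ at $u$ and mark it. The path $v, u, w$ is then a free path: both end edges are marked, the sole interior vertex $u$ is unmarked, and $u$ is incident to four further edges lying outside the path. By \eqref{freepath}, Bob then forces a final score of at least $3$, giving $\colve(\mathcal{T}) \ge 4$.

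The main technical point is the last hypothesis of Theorem \ref{Main Theorem} in the upper bound step: one must check carefully that the chosen apex-vertex angle-marking yields at most (indeed, exactly) two unmarked gray angles at every vertex. This is where the regular structure of $\mathcal{T}$ is essentially used, through the fact that each vertex is the apex of exactly one upward triangle. The lower bound, given \eqref{freepath} in hand, is a short direct construction exploiting the degree-$6$ vertices of $\mathcal{T}$.
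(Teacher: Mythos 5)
Your upper bound is correct and is essentially the paper's own argument: the paper $2$-colors the faces of $\mathcal T$ by orientation and marks the unique ``rightmost'' angle of each gray triangle, which is your apex-marking up to a rotation of the picture, and the verification of the hypotheses of Theorem \ref{Main Theorem} is the same.

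The lower bound, however, has a genuine gap. Inequality \eqref{freepath} requires the free path to exist \emph{at the start of one of Bob's turns}, i.e.\ after Alice has just moved and Bob is about to move; only then can he exploit it (in the length-$2$ case he must immediately mark a third edge at the interior vertex before Alice gets to mark that vertex). In your construction the path $v,u,w$ only comes into existence at the \emph{end} of Bob's turn in round $2$, the moment he marks $uw$. It is then Alice's move, and she simply marks $u$: the only candidate free path now has its unique interior vertex marked, the score at $u$ becomes $0$, and $v$ and $w$ each see just one marked edge, so nothing forces a score of $3$. To invoke \eqref{freepath} Bob must engineer a free path that survives Alice's reply --- for instance a path of length at least $3$, or several overlapping threats that her one vertex per round cannot all defuse --- and that requires a real case analysis (it is, in effect, the content of Bre\v{s}ar et al.'s lower-bound arguments). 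The paper sidesteps this entirely: it quotes $\colve(\mathcal H)=4$ for the infinite regular hexagonal lattice $\mathcal H$ from \cite[Theorem 10]{bresar}, notes that $\mathcal H$ is a subgraph of $\mathcal T$, and applies Inequality \eqref{eqn:HinG} to get $\colve(\mathcal T)\geq 4$. You should either repair your free-path construction along these lines or fall back on the subgraph argument.
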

\begin{proof}
	Since it is known that $\colve(\mathcal H) = 4$ for the infinite regular hexagonal lattice $\mathcal H$, \cite[Theorem 10]{bresar}, the lower bound, $\colve(\mathcal{T}) \geq 4$, follows from Inequality \eqref{eqn:HinG}.

We use Theorem \ref{Main Theorem} to obtain the upper bound.
Begin by $2$-coloring the faces of $\mathcal{T}$ as in Figure \ref{Trinagle Lattice Pic} so that the gray triangles point to the right and the white triangles point to the left. Mark the unique rightmost angle of each gray triangle, the one pointing to the right. Theorem \ref{Main Theorem} shows that $\colve(\mathcal{T})\leq 4$.
\end{proof}

\begin{figure}[hbt!]
\begin{tikzpicture}[scale = 0.5]
    \def \zzz{1cm}
    \def \hhh{1.732cm}

    \foreach \j in {1,3,5,7,9}{
        \foreach \i in {0,...,3}{
            \filldraw[fill=gray!60!white!,thick, draw =black] (\i*\hhh*2+\hhh,\j*\zzz) -- (\i*\hhh*2,\j*\zzz+\zzz) -- (\i*\hhh*2,\j*\zzz-\zzz) -- (\i*\hhh*2+\hhh,\j*\zzz);
            \filldraw[fill=gray!60!white!,thick, draw =black] (\i*\hhh*2+\hhh,\j*\zzz) -- (\i*\hhh*2+\hhh,\j*\zzz+2*\zzz) -- (\i*\hhh*2+2*\hhh,\j*\zzz+\zzz) -- (\i*\hhh*2+\hhh,\j*\zzz);
        }
    }

        \foreach \j in {1,3,5,7,9,11}{
        \foreach \i in {0,...,3}{
            \node[fill,circle, scale = .1] (\i\j) at (\i*\hhh*2+\hhh,\j*\zzz) {};
        }
    }
    \foreach \j in {0,2,4,6,8,10}{
        \foreach \i in {0,...,4}{
            \node[fill,circle, scale = .1] (\i\j) at (\i*\hhh*2,\j*\zzz) {};
        }
    }

    \draw[black,  thick] (40) -- (410);
    \draw[black,  thick] (010) -- (011);
    \draw[black,  thick] (110) -- (111);
    \draw[black,  thick] (210) -- (211);
    \draw[black,  thick] (310) -- (311);
    \draw[black,  thick] (01) -- (10);
    \draw[black,  thick] (11) -- (20);
    \draw[black,  thick] (21) -- (30);
    \draw[black,  thick] (31) -- (40);

    \draw pic[draw=black, thick,-,angle eccentricity=1.2,angle radius=.35cm] {angle=02--01--00};
    \draw pic[draw=black, thick,-,angle eccentricity=1.2,angle radius=.35cm] {angle=12--11--10};
    \draw pic[draw=black, thick,-,angle eccentricity=1.2,angle radius=.35cm] {angle=22--21--20};
    \draw pic[draw=black,thick,-,angle eccentricity=1.2,angle radius=.35cm] {angle=32--31--30};
    \draw pic[draw=black,thick,-,angle eccentricity=1.2,angle radius=.35cm] {angle=04--03--02};
    \draw pic[draw=black,thick,-,angle eccentricity=1.2,angle radius=.35cm] {angle=14--13--12};
    \draw pic[draw=black,thick,-,angle eccentricity=1.2,angle radius=.35cm] {angle=24--23--22};
    \draw pic[draw=black,thick,-,angle eccentricity=1.2,angle radius=.35cm] {angle=34--33--32};
    \draw pic[draw=black,thick,-,angle eccentricity=1.2,angle radius=.35cm] {angle=06--05--04};
    \draw pic[draw=black,thick,-,angle eccentricity=1.2,angle radius=.35cm] {angle=16--15--14};
    \draw pic[draw=black,thick,-,angle eccentricity=1.2,angle radius=.35cm] {angle=26--25--24};
    \draw pic[draw=black,thick,-,angle eccentricity=1.2,angle radius=.35cm] {angle=36--35--34};
    \draw pic[draw=black,thick,-,angle eccentricity=1.2,angle radius=.35cm] {angle=08--07--06};
    \draw pic[draw=black,thick,-,angle eccentricity=1.2,angle radius=.35cm] {angle=18--17--16};
    \draw pic[draw=black,thick,-,angle eccentricity=1.2,angle radius=.35cm] {angle=28--27--26};
    \draw pic[draw=black,thick,-,angle eccentricity=1.2,angle radius=.35cm] {angle=38--37--36};
    \draw pic[draw=black,thick,-,angle eccentricity=1.2,angle radius=.35cm] {angle=010--09--08};
    \draw pic[draw=black,thick,-,angle eccentricity=1.2,angle radius=.35cm] {angle=110--19--18};
    \draw pic[draw=black,thick,-,angle eccentricity=1.2,angle radius=.35cm] {angle=210--29--28};
    \draw pic[draw=black,thick,-,angle eccentricity=1.2,angle radius=.35cm] {angle=310--39--38};

    \draw pic[draw=black,thick,-,angle eccentricity=1.2,angle radius=.35cm] {angle=03--12--01};
    \draw pic[draw=black,thick,-,angle eccentricity=1.2,angle radius=.35cm] {angle=05--14--03};
    \draw pic[draw=black,thick,-,angle eccentricity=1.2,angle radius=.35cm] {angle=07--16--05};
    \draw pic[draw=black,thick,-,angle eccentricity=1.2,angle radius=.35cm] {angle=09--18--07};
    \draw pic[draw=black,thick,-,angle eccentricity=1.2,angle radius=.35cm] {angle=011--110--08};
    \draw pic[draw=black,thick,-,angle eccentricity=1.2,angle radius=.35cm] {angle=13--22--11};
    \draw pic[draw=black,thick,-,angle eccentricity=1.2,angle radius=.35cm] {angle=15--24--13};
    \draw pic[draw=black,thick,-,angle eccentricity=1.2,angle radius=.35cm] {angle=17--26--15};
    \draw pic[draw=black,thick,-,angle eccentricity=1.2,angle radius=.35cm] {angle=19--28--17};
    \draw pic[draw=black,thick,-,angle eccentricity=1.2,angle radius=.35cm] {angle=111--210--18};
    \draw pic[draw=black,thick,-,angle eccentricity=1.2,angle radius=.35cm] {angle=23--32--21};
    \draw pic[draw=black,thick,-,angle eccentricity=1.2,angle radius=.35cm] {angle=25--34--23};
    \draw pic[draw=black,thick,-,angle eccentricity=1.2,angle radius=.35cm] {angle=27--36--25};
    \draw pic[draw=black,thick,-,angle eccentricity=1.2,angle radius=.35cm] {angle=29--38--27};
    \draw pic[draw=black,thick,-,angle eccentricity=1.2,angle radius=.35cm] {angle=211--310--28};
    \draw pic[draw=black,thick,-,angle eccentricity=1.2,angle radius=.35cm] {angle=33--42--31};
    \draw pic[draw=black,thick,-,angle eccentricity=1.2,angle radius=.35cm] {angle=35--44--33};
    \draw pic[draw=black,thick,-,angle eccentricity=1.2,angle radius=.35cm] {angle=37--46--35};
    \draw pic[draw=black,thick,-,angle eccentricity=1.2,angle radius=.35cm] {angle=39--48--37};
    \draw pic[draw=black,thick,-,angle eccentricity=1.2,angle radius=.35cm] {angle=311--410--38};

\end{tikzpicture}
\caption{$\mathcal T$ With Marked Angles}
\label{Trinagle Lattice Pic}
\end{figure}
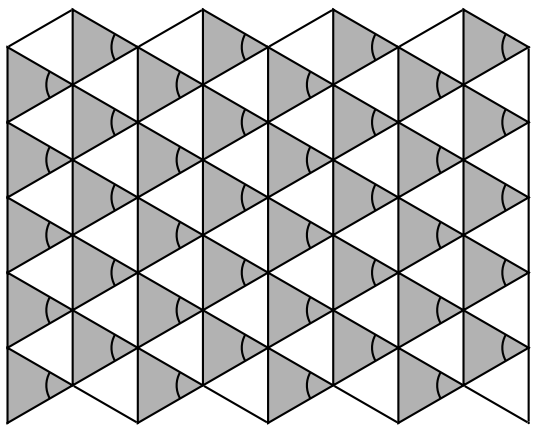

\section{Other Triangularizations}

Theorem \ref{Main Theorem} applies to many graphs. For example, write $\mathcal{R}$ for the infinite triangular lattice obtained by adding a vertex to the center of each face of the infinite regular square lattice with added edges between each new vertex and each vertex in the corresponding face. See Figure \ref{Square Lattice}.

\begin{figure}[H]
	\scalebox{.5}{
		\begin{tikzpicture}[
			right angle triangle/.style={
				isosceles triangle,
				isosceles triangle apex angle=90},
			every node/.style={right angle triangle,
				draw, inner sep=0pt,
				anchor=left corner,
				shape border rotate=90},
			paint/.style={draw=black, fill=#1!60}
			]

			\begin{scope}[yshift=-3cm,
				every node/.style={
					right angle triangle,
					isosceles triangle stretches=false,
					draw, inner sep=0pt,
					minimum height=1cm,
					anchor=apex}
				]
				\draw (0,0) grid (8,8);
				\foreach \i in {0,2}
				\foreach \j in {0,2}
				\foreach \k in {1,3}
				\foreach \a/\c in {45/white, 135/gray,225/white,315/gray}
				\node[paint=\c,
				shape border rotate=\a] at (2*\i + \k,2*\j+\k) {};
				\foreach \i in {1,3}
				\foreach \j in {0,2}
				\foreach \a/\c in {45/gray, 135/white,225/gray,315/white}
				\node[paint=\c,
				shape border rotate=\a] at (2*\i +1 ,2*\j +1) {};
				\foreach \i in {1,3}
				\foreach \j in {0,2}
				\foreach \a/\c in {45/gray, 135/white,225/gray,315/white}
				\node[paint=\c,
				shape border rotate=\a] at (2*\i-1  ,2*\j +3) {};
				\foreach \i in {0,2,4,6,8}
				\draw[black, thick] (\i,0) -- (\i,8);
				\foreach \i in {0,2,4,6,8}
				\draw[black, thick] (0,\i) -- (8,\i);
				
			\end{scope}
	\end{tikzpicture}}
	\caption{Triangularization $\mathcal R$}
	\label{Square Lattice}
\end{figure}

\begin{corollary}\label{Square Lattice Theorem}
 $\colve(\mathcal{R}) = 4$.
\end{corollary}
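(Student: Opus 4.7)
My plan is to follow the template of Corollary \ref{Trianlge Lattice Theorem}. The upper bound $\colve(\mathcal R)\le 4$ will come from Theorem \ref{Main Theorem} applied to the 2-coloring displayed in Figure \ref{Square Lattice}, and the lower bound $\colve(\mathcal R)\ge 4$ from Inequality \eqref{eqn:HinG} by exhibiting the hexagonal lattice $\mathcal H$ as a subgraph of $\mathcal R$, using that $\colve(\mathcal H)=4$ by \cite[Theorem 10]{bresar}.

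For the upper bound, every bounded face of $\mathcal R$ is a triangle, and the coloring in Figure \ref{Square Lattice} is a pinwheel within each square, with the pinwheel orientation flipped in each neighboring square. First I would verify, by a short case check, that each edge of $\mathcal R$ lies in exactly one gray triangle: a diagonal edge is bounded by one gray and one white triangle of the same square (by the pinwheel), while a grid edge is bounded by one gray and one white triangle in two neighboring squares (by the alternation of orientation between adjacent squares). Next I would specify the angle marking: in each gray triangle I would mark the angle at one of the two corner vertices of that triangle, chosen by a symmetric rule, for example always the lexicographically smaller of the two corners. A parity case check then shows that every corner of $\mathcal R$ receives exactly $2$ marked gray angles out of its $4$, and every center receives $0$ marked gray angles out of its $2$; in both cases at most $2$ gray angles are unmarked, so Theorem \ref{Main Theorem} yields $\colve(\mathcal R)\le 4$.

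For the lower bound, I would exhibit $\mathcal H$ as a subgraph of $\mathcal R$ as follows: on the full vertex set of $\mathcal R$, discard every grid edge, and in each square also discard exactly one of the four diagonals --- namely, the diagonal joining the center to the bottom-left corner of that square. Each corner then retains exactly $3$ diagonal neighbors and each center retains exactly $3$ corner neighbors, so the resulting subgraph is $3$-regular and bipartite. A routine verification shows it is connected, has girth $6$ (there are no $4$-cycles, since any two corners share at most one of their three remaining center-neighbors), and that its bounded faces are hexagons, so the subgraph is isomorphic to $\mathcal H$. Inequality \eqref{eqn:HinG} then gives $\colve(\mathcal R)\ge \colve(\mathcal H)=4$.

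The main obstacle is the angle-marking verification for the upper bound. The four gray triangles at a given corner $v$ come from the four squares meeting at $v$, and whether $v$'s angle in such a triangle is the marked one depends both on the parity of the square (which controls the pinwheel orientation) and on $v$'s position within that square. After fixing a canonical rule as above, the check reduces to confirming that in exactly two of the four gray triangles meeting at $v$ the rule selects $v$ itself; once this is seen, the remainder of the proof is immediate.
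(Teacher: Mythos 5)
Your proof is correct, and the upper bound is essentially the paper's argument: the same $2$-coloring from Figure \ref{Square Lattice}, with one corner angle of each gray triangle marked by a uniform rule so that every corner vertex gets exactly $2$ of its $4$ gray angles marked and every center gets $0$ of its $2$; your lexicographic rule and the paper's orientation-based rule (vertical triangles marked at the right corner, horizontal ones at the upper corner) are interchangeable here, and both satisfy the hypotheses of Theorem \ref{Main Theorem}. Where you genuinely diverge is the lower bound. The paper simply observes that the grid edges of $\mathcal R$ form a copy of the infinite square lattice $\mathcal S$ and cites $\colve(\mathcal S)=4$ from \cite[Proposition 11]{bresar}, so the bound is immediate from Inequality \eqref{eqn:HinG}. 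You instead embed the hexagonal lattice $\mathcal H$ by deleting all grid edges and one diagonal per square; this does work (each corner and each center loses exactly one incident diagonal, and each quadrilateral face of the diagonal-only graph contains exactly one deleted edge, so all faces become hexagons), but it forces you to verify that the resulting $3$-regular, all-hexagonal-faces planar lattice really is $\mathcal H$, a step you label ``routine'' but which carries the real weight of your lower bound. The paper's choice of $\mathcal S$ buys a one-line argument; your choice of $\mathcal H$ buys nothing extra here, though it is the subgraph the paper itself uses for $\mathcal T$ in Corollary \ref{Trianlge Lattice Theorem}, so the instinct is understandable.
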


\begin{proof}
	
	Since the infinite square lattice $\mathcal S$ has $\colve(\mathcal S) = 4$, \cite[Proposition 11]{bresar}, the lower bound, $\colve(\mathcal{R}) \geq 4$, follows from Inequality~\eqref{eqn:HinG}.
	
\begin{figure}[hbt!]
\scalebox{.5}{
\begin{tikzpicture}
\node (A) at (0,0) {};
\node (B) at (1,1) {};
\node (C) at (2,0) {};
\node (D) at (2,2) {};
\node (E) at (0,2) {};
\filldraw[fill=gray!60!white!,line width=0.8pt] (0,0) -- (1,1) -- (2,0) --cycle;
\draw [line width=0.8pt](0,0) -- (1,1) -- (0,2) --cycle;
\filldraw[fill=gray!60!white!,line width=0.8pt] (0,2) -- (1,1) -- (2,2) --cycle;
\draw [line width=0.8pt](2,0) -- (1,1) -- (2,2) --cycle;
\draw pic[draw=black, thick,-,angle eccentricity=1.2,angle radius=.7cm]
 {angle = B--C--A};
\draw pic[draw=black, thick,-,angle eccentricity=1.2,angle radius=.7cm]
 {angle = E--D--B};
\end{tikzpicture}
}
\caption{Vertical Triangles}
\label{Single Square With Right Angles Marked}
\end{figure}
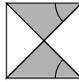


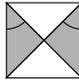
\begin{figure}[hbt!]
\scalebox{.5}{
\begin{tikzpicture}
\node (A) at (0,0) {};
\node (B) at (1,1) {};
\node (C) at (2,0) {};
\node (D) at (2,2) {};
\node (E) at (0,2) {};
\filldraw[fill=gray!60!white!,line width=0.8pt] (0,0) -- (1,1) -- (0,2) --cycle;
\draw [line width=0.8pt](0,0) -- (1,1) -- (2,0) --cycle;
\filldraw[fill=gray!60!white!,line width=0.8pt] (2,0) -- (1,1) -- (2,2) --cycle;
\draw [line width=0.8pt](0,2) -- (1,1) -- (2,2) --cycle;
\draw pic[draw=black, thick,-,angle eccentricity=1.2,angle radius=.7cm]
 {angle = A--E--B};
\draw pic[draw=black, thick,-,angle eccentricity=1.2,angle radius=.7cm]
 {angle = B--D--C};
\end{tikzpicture}
}
\caption{Horizontal Triangles}
\label{Singe Square With Up Angles Marked}
\end{figure}


	To get the upper bound, begin by $2$-coloring the faces of $\mathcal R$ as in Figure \ref{Square Lattice}. Notice the gray triangles are either vertical, as in Figure \ref{Single Square With Right Angles Marked}, or horizontal, as in Figure \ref{Singe Square With Up Angles Marked}. Mark the vertical and horizontal gray triangles as pictured in Figures \ref{Single Square With Right Angles Marked} and \ref{Singe Square With Up Angles Marked}, respectively, with vertical triangles marked to the right and horizontal triangles marked upwards. Theorem \ref{Main Theorem} shows that $\colve(\mathcal{R})\leq 4$.
\end{proof}

As another example, write $\mathcal C$ for
 the infinite triangular lattice $\mathcal{C}$ obtained from the regular square-octagon lattice by adding a vertex in the center of each face and edges from each new vertex to the vertices of the corresponding face. See Figure \ref{Subway Tiling}.

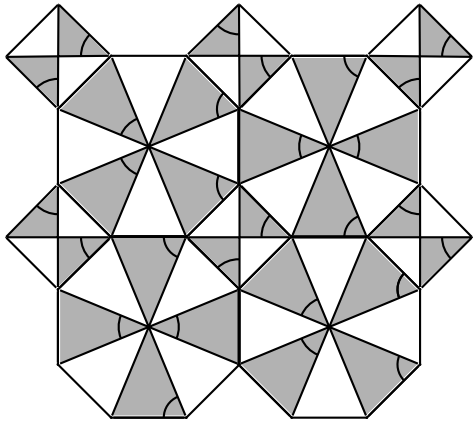
\begin{figure}[H]
	\begin{tikzpicture}[scale = 1]
		\def\zzz{1cm}
		
		\draw[black, thick] (-1.7,0) foreach \x[count=\xi from 0] in {A,...,H}{
			node[dw] (\x){}
			--++(\xi*45:\zzz)
		};
		
		\filldraw[fill=gray!60!white!,line width=0.8pt, draw =black] (G) -- (F) -- (intersection of A--E and B--F) -- (G);
		\filldraw[fill=gray!60!white!,line width=0.8pt, draw =black] (E) -- (D) -- (intersection of A--E and B--F) -- (E);
		\filldraw[fill=gray!60!white!,line width=0.8pt, draw =black] (H) -- (A) -- (intersection of A--E and B--F) -- (H);
		\filldraw[fill=gray!60!white!,line width=0.8pt, draw =black] (B) -- (C) -- (intersection of A--E and B--F) -- (B);
		\node[filled, scale=.1] (C1) at (intersection of A--E and B--F) {};
		\node[filled, scale=.1] at (A) {};
		\node[filled, scale=.1] at (B) {};
		\node[filled, scale=.1] at (C) {};
		\node[filled, scale=.1] at (D) {};
		\node[filled, scale=.1] at (E) {};
		\node[filled, scale=.1] at (F) {};
		\node[filled, scale=.1] at (G) {};
		\node[filled, scale=.1] at (H) {};		
		
		\draw[black, thick] (0.7,0) foreach \x[count=\xi from 0] in {I,...,P}{
			node[dw] (\x){}
			--++(\xi*45:\zzz)
		};
		
		\filldraw[fill=gray!60!white!,line width=0.8pt, draw =black] (I) -- (J) -- (intersection of I--M and J--N) -- (I);
		\filldraw[fill=gray!60!white!,line width=0.8pt, draw =black] (K) -- (L) -- (intersection of I--M and J--N) -- (K);
		\filldraw[fill=gray!60!white!,line width=0.8pt, draw =black] (M) -- (N) -- (intersection of I--M and J--N) -- (M);
		\filldraw[fill=gray!60!white!,line width=0.8pt, draw =black] (O) -- (P) -- (intersection of I--M and J--N) -- (O);
		\node[filled, scale=.1] (C2) at (intersection of I--M and J--N) {};
		\node[filled, scale=.1] at (I) {};
		\node[filled, scale=.1] at (J) {};
		\node[filled, scale=.1] at (K) {};
		\node[filled, scale=.1] at (L) {};
		\node[filled, scale=.1] at (M) {};
		\node[filled, scale=.1] at (N) {};
		\node[filled, scale=.1] at (O) {};
		\node[filled, scale=.1] at (P) {};
		
		\draw[black, thick] (-1.7,-2.4) foreach \x[count=\xi from 0] in {Q,...,X}{
			node[dw] (\x){}
			--++(\xi*45:\zzz)
		};

		\filldraw[fill=gray!60!white!,line width=0.8pt, draw =black] (Q) -- (R) -- (intersection of R--V and Q--U) -- (Q);
		\filldraw[fill=gray!60!white!,line width=0.8pt, draw =black] (S) -- (T) -- (intersection of R--V and Q--U) -- (S);
		\filldraw[fill=gray!60!white!,line width=0.8pt, draw =black] (U) -- (V) -- (intersection of R--V and Q--U) -- (U);
		\filldraw[fill=gray!60!white!,line width=0.8pt, draw =black] (W) -- (X) -- (intersection of R--V and Q--U) -- (W);
		\node[filled, scale=.1] (C3) at (intersection of R--V and Q--U) {};
		\node[filled, scale=.1] at (Q) {};
		\node[filled, scale=.1] at (R) {};
		\node[filled, scale=.1] at (S) {};
		\node[filled, scale=.1] at (T) {};
		\node[filled, scale=.1] at (U) {};
		\node[filled, scale=.1] at (V) {};
		\node[filled, scale=.1] at (W) {};
		\node[filled, scale=.1] at (X) {};
		
		\draw[black, thick] (0.7,-2.4) foreach \x[count=\xi from 0] in {Q,...,X}{
			node[dw] (\x\x){}
			--++(\xi*45:\zzz)
		};

		\filldraw[fill=gray!60!white!,line width=0.8pt, draw =black] (SS) -- (RR) -- (intersection of RR--VV and QQ--UU) -- (SS);
		\filldraw[fill=gray!60!white!,line width=0.8pt, draw =black] (UU) -- (TT) -- (intersection of RR--VV and QQ--UU) -- (UU);
		\filldraw[fill=gray!60!white!,line width=0.8pt, draw =black] (WW) -- (VV) -- (intersection of RR--VV and QQ--UU) -- (WW);
		\filldraw[fill=gray!60!white!,line width=0.8pt, draw =black] (QQ) -- (XX) -- (intersection of RR--VV and QQ--UU) -- (QQ);
		\node[filled, scale=.1] (C4) at (intersection of RR--VV and QQ--UU) {};
		\node[filled, scale=.1] at (QQ) {};
		\node[filled, scale=.1] at (RR) {};
		\node[filled, scale=.1] at (SS) {};
		\node[filled, scale=.1] at (TT) {};
		\node[filled, scale=.1] at (UU) {};
		\node[filled, scale=.1] at (VV) {};
		\node[filled, scale=.1] at (WW) {};
		\node[filled, scale=.1] at (XX) {};		
		
		\node[filled, scale=.1] (AA) at (0,3.1){};
		\filldraw[fill=gray!60!white!,line width=0.8pt, draw =black] (E) -- (AA) -- (intersection of AA--D and E--N) -- (E);
		\node[filled, scale=.1] () at (0,3.1){};
		\filldraw[fill=gray!60!white!,line width=0.8pt, draw =black] (N) -- (D) -- (intersection of AA--D and E--N) -- (N);
		\draw[black, thick] (AA) -- (N);
		\node[filled, scale=.1] (S1) at (intersection of AA--D and E--N) {};
		
		\node[filled, scale=.1] (BB) at (2.4,3.1){};
		\node[filled, scale=.1] (CC) at (3.1,2.4){};
		\filldraw[fill=gray!60!white!,line width=0.8pt, draw =black] (M) -- (intersection of BB--L and CC--M) -- (L) -- (M);
		\filldraw[fill=gray!60!white!,line width=0.8pt, draw =black] (BB) -- (intersection of BB--L and CC--M) -- (CC) -- (BB);

		\draw[black, thick] (BB) -- (M);
		\draw[black, thick] (CC) -- (L);
		\node[filled, scale=.1] (S2) at (intersection of BB--L and CC--M) {};

		\node[filled, scale=.1] (BBB) at (-2.4,3.1){};
		\node[filled, scale=.1] (CCC) at (-3.1,2.4){};	
		\filldraw[fill=gray!60!white!,line width=0.8pt, draw =black] (CCC) -- (intersection of BBB--G and CCC--F) -- (G) -- (CCC);
		\filldraw[fill=gray!60!white!,line width=0.8pt, draw =black] (BBB) -- (intersection of BBB--G and CCC--F) -- (F) -- (BBB);
		\draw[black, thick] (BBB) -- (CCC);
		
		\filldraw[fill=gray!60!white!,line width=0.8pt, draw =black] (C) -- (I) -- (intersection of C--T and B--I) -- (C);
		\filldraw[fill=gray!60!white!,line width=0.8pt, draw =black] (B) -- (T) -- (intersection of C--T and B--I) -- (B);
		\node[filled, scale=.1] (S3) at (intersection of C--T and B--I) {};
		
		\node[filled, scale=.1] (DD) at (3.1,0){};
		\filldraw[fill=gray!60!white!,line width=0.8pt, draw =black] (J) -- (K) -- (intersection of J--DD and K--TT) -- (J);
		\filldraw[fill=gray!60!white!,line width=0.8pt, draw =black] (DD) -- (TT) -- (intersection of J--DD and K--TT) -- (DD);
		\node[filled, scale=.1] (DD) at (3.1,0){};
		\draw[black, thick] (DD) -- (K);
		\node[filled, scale=.1] (S4) at (intersection of J--DD and K--TT) {};	

		\node[filled, scale=.1] (DDD) at (-3.1,0){};	
		\filldraw[fill=gray!60!white!,line width=0.8pt, draw =black] (DDD) -- (intersection of DDD--A and H--W) -- (H) -- (DDD);
		\filldraw[fill=gray!60!white!,line width=0.8pt, draw =black] (W) -- (intersection of DDD--A and H--W) -- (A) -- (W);
		\draw[black, thick] (DDD) -- (W);
		
		\tikzAngleOfLine(C1)(H){\AngleStart}
		\tikzAngleOfLine(C1)(A){\AngleEnd}
		\draw[black, thick, -] (C1)+(\AngleStart:.4cm) arc (\AngleStart:\AngleEnd:.4 cm);
		
		\tikzAngleOfLine(C1)(G){\AngleStart}
		\tikzAngleOfLine(C1)(F){\AngleEnd}
		\draw[black, thick, -] (C1)+(\AngleStart:.4cm) arc (\AngleStart:\AngleEnd:.4 cm);
		
		\tikzAngleOfLine(D)(C1){\AngleStart}
		\tikzAngleOfLine(D)(E){\AngleEnd}
		\draw[black, thick, -] (D)+(\AngleStart:.3cm) arc (\AngleStart:\AngleEnd:.3 cm);
		
		\tikzAngleOfLine(C)(C1){\AngleStart}
		\tikzAngleOfLine(C)(B){\AngleEnd}
		\draw[black, thick, -] (C)+(\AngleStart:.3cm) arc (\AngleStart:\AngleEnd:.3 cm);
		
		\tikzAngleOfLine(AA)(E){\AngleStart}
		\tikzAngleOfLine(AA)(S1){\AngleEnd}
		\draw[black, thick, -] (AA)+(\AngleStart:.4cm) arc (\AngleStart:\AngleEnd:.4 cm);
		
		\tikzAngleOfLine(N)(S1){\AngleStart}
		\tikzAngleOfLine(N)(D){\AngleEnd}
		\draw[black, thick, -] (N)+(\AngleStart:.4cm) arc (\AngleStart:\AngleEnd:.4 cm);
		
		\tikzAngleOfLine(C2)(O){\AngleStart}
		\tikzAngleOfLine(C2)(P){\AngleEnd}
		\draw[black, thick, -] (C2)+(\AngleStart:.4cm) arc (\AngleStart:\AngleEnd:.4 cm);
		
		
		\draw pic[draw=black,thick,-,angle eccentricity=1.2,angle radius=.4cm] {angle=K--C2--L};
		\draw pic[draw=black,thick,-,angle eccentricity=1.2,angle radius=.4cm] {angle=S2--L--M};
		\draw pic[draw=black,thick,-,angle eccentricity=1.2,angle radius=.4cm] {angle=BB--CC--S2};
		\draw pic[draw=black,thick,-,angle eccentricity=1.2,angle radius=.4cm] {angle=W--C3--X};
		\draw pic[draw=black,thick,-,angle eccentricity=1.2,angle radius=.4cm] {angle=S--C3--T};
		\draw pic[draw=black,thick,-,angle eccentricity=1.2,angle radius=.4cm] {angle=VV--C4--WW};
		\draw pic[draw=black,thick,-,angle eccentricity=1.2,angle radius=.4cm] {angle=XX--C4--QQ};
		\draw pic[draw=black,thick,-,angle eccentricity=1.2,angle radius=.4cm] {angle=P--VV--S3};
		\draw pic[draw=black,thick,-,angle eccentricity=1.2,angle radius=.4cm] {angle=S3--WW--U};
		\draw pic[draw=black,thick,-,angle eccentricity=1.2,angle radius=.4cm] {angle=J--K--S4};
		\draw pic[draw=black,thick,-,angle eccentricity=1.2,angle radius=.4cm] {angle=S4--DD--TT};
		\draw pic[draw=black,thick,-,angle eccentricity=1.2,angle radius=.3cm] {angle=C4--SS--RR};
		\draw pic[draw=black,thick,-,angle eccentricity=1.2,angle radius=.3cm] {angle=UU--TT--C4};
		\draw pic[draw=black,thick,-,angle eccentricity=1.2,angle radius=.3cm] {angle=N--M--C2};
		\draw pic[draw=black,thick,-,angle eccentricity=1.2,angle radius=.3cm] {angle=C2--J--I};
		\draw pic[draw=black,thick,-,angle eccentricity=1.2,angle radius=.3cm] {angle=UU--TT--C4};
		\draw pic[draw=black,thick,-,angle eccentricity=1.2,angle radius=.3cm] {angle=V--U--C3};
		\draw pic[draw=black,thick,-,angle eccentricity=1.2,angle radius=.3cm] {angle=C3--R--Q};		

		\draw pic[draw=black,thick,-,angle eccentricity=1.2,angle radius=.4cm] {angle=BBB--G--CCC};
		\draw pic[draw=black,thick,-,angle eccentricity=1.2,angle radius=.4cm] {angle=BBB--F--CCC};	
		\draw pic[draw=black,thick,-,angle eccentricity=1.2,angle radius=.4cm] {angle=DDD--H--W};
		\draw pic[draw=black,thick,-,angle eccentricity=1.2,angle radius=.4cm] {angle=DDD--A--W};
		
	\end{tikzpicture}
	\caption{Triangularization $\mathcal C$}
	\label{Subway Tiling}
\end{figure}

\begin{corollary} \label{subway tiling theorem}
 $\colve(\mathcal{C}) = 4$.
\end{corollary}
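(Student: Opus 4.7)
The plan follows the template of Corollary~\ref{Square Lattice Theorem}. For the lower bound $\colve(\mathcal C)\geq 4$, I would invoke inequality~\eqref{eqn:HinG} using a suitable subgraph of $\mathcal C$ with known vertex-edge coloring number $4$. The natural candidates are the infinite square lattice $\mathcal S$ (with $\colve(\mathcal S)=4$ by \cite[Proposition 11]{bresar}) or the infinite hexagonal lattice $\mathcal H$ (with $\colve(\mathcal H)=4$ by \cite[Theorem 10]{bresar}), whichever can be realized as a periodic subgraph of $\mathcal C$ carved out of the underlying 4.8.8 lattice and its added spokes.

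For the upper bound $\colve(\mathcal C)\leq 4$, I would verify the five hypotheses of Theorem~\ref{Main Theorem} for the 2-coloring and angle marking depicted in Figure~\ref{Subway Tiling}. Every face of $\mathcal C$ is a triangle and every vertex has even degree — namely $6$ at the original vertices of the 4.8.8 lattice, $4$ at each square center, and $8$ at each octagon center — so the planar dual is bipartite, and $\mathcal C$ admits a 2-face-coloring with gray and white alternating around each vertex. Consequently, adjacent triangles receive opposite colors and each edge lies on exactly one gray triangle. A direct inspection of Figure~\ref{Subway Tiling} confirms that precisely one angle is marked in every gray triangle.

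The main obstacle is the last hypothesis: that each vertex is incident to at most two unmarked gray angles. Counting gray triangles by vertex type, a square center meets $2$ gray triangles (no constraint), an original vertex meets $3$ (so at least one of its gray angles must be marked), and an octagon center meets $4$ (so at least two of its gray angles must be marked). The binding condition at octagon centers is the critical one: among the four gray triangles around each octagon center, the chosen marking must place the marked angle at the center in at least two of them, while still using exactly one marked angle per gray triangle across the whole tiling and covering the degree-$6$ constraint at every original vertex. This is a compatibility check on a fundamental domain consisting of one octagon together with one adjacent square, and the pattern displayed in Figure~\ref{Subway Tiling} is designed to realize it. Once this local-to-global check is completed, Theorem~\ref{Main Theorem} immediately yields $\colve(\mathcal C)\leq 4$, and combined with the lower bound this gives $\colve(\mathcal C)=4$.
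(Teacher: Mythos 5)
Your proposal matches the paper's proof: the paper likewise gets the lower bound from Inequality~\eqref{eqn:HinG} using the infinite square lattice $\mathcal S$ as a subgraph of $\mathcal C$, and the upper bound by applying Theorem~\ref{Main Theorem} to the coloring and angle marking of Figure~\ref{Subway Tiling}. Your extra degree-counting analysis of the marking constraints (in particular at octagon centers) is a correct elaboration of the verification the paper leaves to the figure.
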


\begin{proof}
	As in Corollary \ref{Square Lattice Theorem}, the infinite square lattice $\mathcal S$ is a subgraph of $\mathcal C$, and $\colve(\mathcal C) \geq 4$ holds. For an upper bound, color the faces of $\mathcal C$ and mark angles as in Figure \ref{Subway Tiling}. Then Theorem \ref{Main Theorem} shows that $\colve(\mathcal{C})\leq 4$.
\end{proof}

Certain geometric structures permit the extension of a bound for $\colve(G)$ to a larger graph $G'$ if we know enough about Alice's strategy. As a starting illustration, let $\mathcal T'$ be $\mathcal T$ with a vertex added to the center of each face and edges connecting each new vertex to the vertices of the corresponding face.
See Figure \ref{Triangle Lattice With Extra Lines}. Note that the resulting graph is not $2$-colorable.

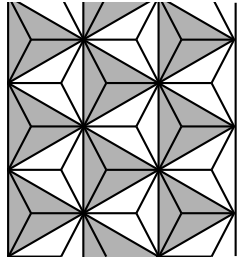
\begin{figure}[hbt!]
	\begin{tikzpicture}[scale=1]
		\clip       (-1, {-1/sqrt(3)}) rectangle (2.73, 2.8);
		
		\draw[black,thick] (2.72,2.8)--(2.72, -0.57);
		
		\foreach \i in {0,...,2}{
			\foreach \j in {0,...,2}{
				\node[isosceles triangle,
				isosceles triangle apex angle=60,
				draw,fill=gray!60,thick,  minimum size =1cm] at (\i*2,{\j/sin(60)}){};
		}}
		\foreach \j in {0,...,3}{
			\node[isosceles triangle,
			isosceles triangle apex angle=60,
			draw,fill=gray!60,thick,  minimum size =1cm] at (1,{(\j-1)/sin(60)+tan(30)}){};
		}
		
		\foreach \i in {0,2}{
			\foreach \j in {0,...,2}{
				
				
				\draw[black,thick] (\i,{\j/sin(60)})--({\i-0.3}, {\j/sin(60)+1/sqrt(3)});
				\draw[black,thick] (\i,{\j/sin(60)})--({\i-0.3}, {(\j-1)/sin(60)+1/sqrt(3)});
				\draw[black,thick] (\i,{\j/sin(60)})--({\i+0.7}, {\j/sin(60)});
				\draw[black,thick] ({\i+0.7}, {\j/sin(60)})--({\i+0.4}, {\j/sin(60)+1/sqrt(3)});
				\draw[black,thick] ({\i+0.7}, {\j/sin(60)})--({\i+0.4}, {(\j-1)/sin(60)+1/sqrt(3)});
				\draw[black,thick] ({\i-0.3},{(\j-1)/sin(60)+1/sqrt(3)})--({\i+0.4}, {(\j-1)/sin(60)+1/sqrt(3)});
		}}

		\foreach \j in {0,...,2}{
			
			\draw[black,thick] ({1}, {(\j+1)/sin(60)-1/sqrt(3)})--({1+0.7}, {(\j+1)/sin(60)-1/sqrt(3)});
			\draw[black,thick] ({.7}, {\j/sin(60)})--({1.4}, {\j/sin(60)});
			
			\draw[black,thick] ({1}, {(\j+1)/sin(60)-1/sqrt(3)})--({.7}, {\j/sin(60)});
			\draw[black,thick] ({1},{\j/sin(60)-1/sqrt(3)})--({.7}, {\j/sin(60)});
			\draw[black,thick] ({1+.7}, {(\j+1)/sin(60)-1/sqrt(3)})--({1.4}, {\j/sin(60)});
			\draw[black,thick] ({1.7},{\j/sin(60)-1/sqrt(3)})--({1.4}, {\j/sin(60)});
			
		}

	\end{tikzpicture}
	\caption{Triangularization $\mathcal T'$}
	\label{Triangle Lattice With Extra Lines}
\end{figure}

\begin{theorem}\label{graph extension theorem}
	Let $G'=(V',E')$ be a graph with a vertex-induced subgraph $G=(V,E)$ so that
	\begin{itemize}
		\item for $v\in V' -V$, $\deg(v) < n$ in $G'$,
		\item $\colve(G) = n$, and
		\item Alice has a strategy on $G$ able to leave no unmarked vertices with $n-1$ marked incident edges at the end of her turn.
	\end{itemize}
    Then \[\colve(G')= n.\]
\end{theorem}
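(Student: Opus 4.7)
The plan is to establish both bounds. The lower bound $\colve(G') \geq n$ is immediate from Inequality \eqref{eqn:HinG} because $G$ is a subgraph of $G'$. For the upper bound I will construct a strategy for Alice on $G'$ that extends her assumed $G$-strategy $\sigma$. The key preliminary observation is that vertices in $V' - V$ are automatically safe: each has degree strictly less than $n$ in $G'$, so its score cannot reach $n$ no matter what Bob does. Hence it suffices to keep the score below $n$ at every vertex in $V$.

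Alice's strategy on $G'$ will be to mentally maintain a virtual game on $G$ played according to $\sigma$. When Bob marks $e \in E$, she treats this as his virtual move, uses $\sigma$ to compute her virtual response $v \in V$, and plays $v$ in $G'$, or any other unmarked vertex if $v$ is already marked in $G'$ from a prior extra move (while still recording $v$ as her virtual response). When Bob marks $e \in E' - E$ with $V$-endpoint $u$, she plays $u$ in $G'$ if $u$ is unmarked, and otherwise plays any unmarked vertex; the virtual game is not updated on such turns. Edges in $E' - E$ with both endpoints in $V' - V$ are handled analogously with an arbitrary unmarked vertex. The central claim is that whenever $u \in V$ is unmarked in $G'$, no edge in $E' - E$ incident to $u$ has ever been marked, because the first time Bob would mark such an edge Alice's rule marks $u$, which remains marked thereafter.

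Combining this claim with $\sigma$'s hypothesized invariant in the virtual game (at the end of each virtual Alice turn, no virtually unmarked vertex has $n-1$ marked $E$-edges) and the fact that every virtually marked vertex of $V$ is also marked in $G'$ yields the main invariant: at the end of each of Alice's $G'$-turns, every unmarked $v \in V$ has at most $n-2$ marked incident edges in $G'$. After Bob's subsequent move this count can rise to at most $n-1$, still strictly below $n$. Together with the automatic degree bound for vertices in $V' - V$, this shows $\colve(G') \leq n$.

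The main obstacle is verifying that the virtual $G$-game really is a legitimate play of $\sigma$, since Alice sometimes cannot physically place $\sigma$'s prescribed vertex in $G'$ and since her extra $V$-markings are invisible to the virtual game. The resolution rests on the fact that $\sigma$'s operation depends only on the virtual state, and that state is legally updated at each step: Bob's virtual edge moves are exactly the $E$-edges he plays, and Alice's virtual responses are the legal outputs of $\sigma$ applied to that state (the chosen vertex is virtually unmarked even when it happens to be marked in $G'$). Alice's $G'$-only markings neither alter any $E$-edge count nor change the virtual state, so they cannot interfere with $\sigma$'s invariant; the only remaining care is that the \emph{any other unmarked vertex} fallback is always available, which holds while the game is ongoing.
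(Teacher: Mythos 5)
Your proof is correct and follows essentially the same route as the paper's: the lower bound comes from Inequality \eqref{eqn:HinG}, and the upper bound from having Alice simulate her $G$-strategy on the edges of $E$ while immediately marking the unmarked $V$-endpoint of any edge of $E'-E$ that Bob plays, with vertices of $V'-V$ dismissed outright by the degree hypothesis. Your explicit virtual-game bookkeeping (in particular, handling the case where the simulated strategy prescribes a vertex already marked in $G'$ by an earlier extra move) fills in a detail the paper's proof leaves implicit, but the underlying argument is the same.
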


\begin{proof}
	We begin with a note on the third requirement. To prevent Bob from getting a score of $n$ on $G$, Alice can ignore vertices in $G$ that only have degree $n-1$. In other words, she only needs a strategy able to mark any vertex with $n-1$ marked incident edges \textit{and at least one other unmarked incident edge} at the end of her turn. Therefore, the third requirement demands more of Alice since she can no longer ignore any vertices of degree $n-1$.
	
	Turning to the proof, as usual, the lower bound follows from Inequality \eqref{eqn:HinG}. For the upper bound, note that vertices in $V'-V$ have degree at most $n-1$. Therefore those vertices cannot help Bob get a score of $n$ or higher. Now in $G$, Alice has a strategy that allows her to mark any vertex with $n-1$ incident marked edges at the end of her turn.
	
	Alice's strategy for the game on $G'$ is as follows. If Bob marks an edge $e \in E'-E$ with both incident vertices in $V'-V$, Alice is free to mark any vertex of $G'$. If $e$ has one incident vertex in $V$ and it is unmarked, Alice marks that vertex. Otherwise she is free to mark any other vertex. If Bob marks an edge in $E$, then Alice uses her strategy on $G$ to play in $G$, leaving no unmarked vertices with $n-1$ marked incident edges in $G$ at the end of her turn.
	
	To see this strategy forces a final score $< n$, first note that vertices in $V'-V$ cannot help Bob get a score of $n$. Now if Bob were able to get $n$ edges marked in $G'$ incident to an unmarked vertex $v\in V$, the edges cannot all be in $E$ by Alice's original strategy. Therefore at least one of the edges must be in $E'-E$. But by the strategy in the preceding paragraph, there must be exactly one such edge and it must have been marked last. However, that means that in the previous round, before this edge was marked, Bob had $n-1$ marked edges in $E$ incident to $v$. But in that case, Alice's original strategy would have marked $v$ already.
\end{proof}

\begin{corollary} \label{G'}
	$\colve(\mathcal T') = 4$.
\end{corollary}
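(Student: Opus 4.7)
The plan is to invoke Theorem \ref{graph extension theorem} with $G = \mathcal T$, $G' = \mathcal T'$, and $n = 4$. Two of its hypotheses are immediate: every vertex in $V(\mathcal T') - V(\mathcal T)$ is the centroid of a triangular face of $\mathcal T$, joined only to the three corners of that face, hence has degree $3 < 4$ in $\mathcal T'$; and $\colve(\mathcal T) = 4$ is precisely Corollary \ref{Trianlge Lattice Theorem}.

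The hinge of the argument, and the one nontrivial step, is verifying the third hypothesis: that Alice has a strategy on $\mathcal T$ under which no unmarked vertex carries three marked incident edges at the end of any of her turns. I would reuse the very strategy constructed in the proof of Theorem \ref{Main Theorem}, equipped with the $2$-coloring and angle marking displayed in Figure \ref{Trinagle Lattice Pic}. The key observation is that the case analysis at the end of that proof already yields the required strengthening with essentially no additional work: whenever an unmarked vertex $v$ has three marked incident edges $e_1, e_2, e_3$ at the end of some Bob's turn, the analysis forces $e_3$ to have just been played and shows that exactly one of the rules \textbf{R1}, \textbf{R2}, \textbf{R3} obliges Alice's next move to be $v$ itself. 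Since the vertex Alice is forced to mark is uniquely determined by Bob's most recent edge together with the applicable rule, at most one such problematic $v$ can arise per Bob's turn, and Alice's reply clears it. Consequently, after each of Alice's turns, no unmarked vertex of $\mathcal T$ is incident to three marked edges, which is exactly the invariant Theorem \ref{graph extension theorem} demands.

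The main obstacle, to the extent there is one, is this re-reading of Theorem \ref{Main Theorem}'s proof: one must confirm that the concluding case analysis, which is phrased around a single hypothetical vertex $v$, in fact establishes the uniqueness of such a $v$. Once that is in hand, all three hypotheses of Theorem \ref{graph extension theorem} are satisfied, and it delivers $\colve(\mathcal T') = 4$ directly, without any need to separately argue the lower bound.
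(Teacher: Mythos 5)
Your proposal is correct and follows exactly the route the paper takes: the paper's own proof of this corollary is a one-line instruction to check that the hypotheses of Theorem \ref{graph extension theorem} are met by re-examining the strategy from Theorem \ref{Main Theorem} as used in Corollary \ref{Trianlge Lattice Theorem}, which is precisely the verification you carry out. Your added observation — that the case analysis forces Alice's single reply to be the problematic vertex itself, so at most one such vertex can exist per round and the strengthened invariant holds — is the detail the paper leaves implicit, and it is sound.
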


\begin{proof}
	This follows by examining the proof in Corollary \ref{Trianlge Lattice Theorem} to see that the conditions of Theorem \ref{graph extension theorem} are met.
\end{proof}

The same technique gives the following result.

\begin{corollary}\label{Extra Lines Theorem}
	For a triangular lattice,  $D$, obtained from any of the lattices from Corollaries \ref{Trianlge Lattice Theorem}, \ref{Square Lattice Theorem}, and \ref{subway tiling theorem} by adding a vertex to the center of every triangular face and connecting it to each corner of the triangle, $\colve(D)=4$.
\end{corollary}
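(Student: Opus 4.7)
The plan is to apply Theorem \ref{graph extension theorem} with $n=4$, taking $G' = D$ and letting $G$ be the underlying lattice ($\mathcal{T}$, $\mathcal{R}$, or $\mathcal{C}$, respectively). The lower bound $\colve(D) \geq 4$ will then follow immediately from Inequality~\eqref{eqn:HinG}, since $G$ is a vertex-induced subgraph of $D$ and $\colve(G) = 4$ by Corollary \ref{Trianlge Lattice Theorem}, \ref{Square Lattice Theorem}, or \ref{subway tiling theorem}.

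For the upper bound I would verify the three hypotheses of Theorem \ref{graph extension theorem}. The condition $\colve(G) = 4$ is again supplied by the corollary in question. For the degree condition, each added vertex $v \in V(D) - V(G)$ sits at the center of a triangular face of $G$ and is adjacent only to its three corners, so $\deg_{D}(v) = 3 < 4 = n$. This uses that every bounded face of $\mathcal{T}$, $\mathcal{R}$, and $\mathcal{C}$ is a triangle, which is true by construction: the square faces of $\mathcal{S}$, and the square and octagonal faces of the square--octagon lattice, were themselves triangulated to produce $\mathcal{R}$ and $\mathcal{C}$.

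The substantive step is verifying the third, strategic hypothesis: Alice must have a strategy on $G$ that leaves no unmarked vertex with three marked incident edges at the end of \emph{her} turn. I plan to extract this directly from the proof of Theorem \ref{Main Theorem}: that proof shows that whenever an unmarked vertex $v$ acquires a third marked incident edge during Bob's turn, one of the rules R1, R2, R3 forces Alice to mark $v$ immediately on her following turn. So Alice's strategy from Theorem \ref{Main Theorem}, as applied in each of the three underlying corollaries, is already strong enough to meet the stronger conclusion required by Theorem \ref{graph extension theorem}. Combining the three hypotheses then gives $\colve(D) \le 4$, and together with the lower bound, $\colve(D) = 4$.

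The main ``difficulty'' is purely bookkeeping, namely checking that the strategy already written down in Theorem \ref{Main Theorem} does more than merely bound the final score by $4$ and in fact clears all unmarked vertices of incidence count $3$ at the end of Alice's turn. Once this observation is made, the argument reduces to three parallel applications of Theorem \ref{graph extension theorem}, with the only per-lattice work being the triviality that centers of triangles have degree three.
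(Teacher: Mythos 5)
Your proposal is correct and follows exactly the paper's (very terse) argument: the paper likewise proves this by invoking Theorem \ref{graph extension theorem} with $n=4$, observing that the added face-center vertices have degree $3<4$ and that the strategy from Theorem \ref{Main Theorem} in fact marks every unmarked vertex that has acquired three marked incident edges before the end of Alice's next turn, so the third hypothesis holds. You have simply written out the bookkeeping that the paper leaves to the reader.
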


\section{Futher Results and Conjectures}	\label{sec: further}

Though this paper has so far focused on bounding $\colve(G)$ from above, there is no absolute upper bound for all graphs. For example, it is known that $\colve(K_n)$ is unbounded, where $K_n$ is the complete graph on $n$ vertices \cite[Theorem 17]{bresar}. Bounding the vertex-edge coloring number from below often involves
free paths and Inequality \eqref{freepath}. However, this is only useful for obtaining a lower bound of $4$ for $\colve(G)$. Higher lower bounds can be obtained by generalizing the notion of free paths.

\begin{definition}
	In a vertex-edge marking game, $\mathcal G$, on a graph $G$, an $n$-\textit{free path} is a path $P$ of $G$ with vertex sequence $v_0, \ldots, v_k$ with $k \geq 2$ so that
	\begin{itemize}
		\item the first and last edges of $P$, $v_0v_1$ and $v_{k-1}v_k$, are marked,
		\item the interior vertices, $v_1,\ldots,v_{k-1}$, are not marked, and
		\item each interior vertex is incident to at least $n+1$ edges not in $P$ and at least $n$ of those edges are marked.
	\end{itemize}
\end{definition}

Note that a $0$-free path coincides with a free path. See Figure \ref{nfreepath} for an example of a $3$-free path of length $5$.

\begin{figure}[H]
    \centering
    \begin{tikzpicture}
    \tikzstyle{hollow node}=[draw,circle, fill=white, outer sep=-4pt,inner sep=0pt,
    minimum width=8pt, above]
    \node[filled, minimum width = 8pt] (0) at (0,0) {};
        \foreach \i [evaluate=\i as \j using {int(\i-1)}] in {1,...,4}{
        \node (\i) at (\i*1.5,0) {};
        \draw[black, thick] (\i) -- (\j);
        \node[filled, minimum width = 8pt] (l\i) at (\i*1.5-.5,-1){};
        \node[filled, minimum width = 8pt] (r\i) at (\i*1.5+.5,-1) {};
        \node[filled, minimum width = 8pt] (c\i) at (\i*1.5,-1) {};
        \node[filled, minimum width = 8pt] (u\i) at (\i*1.5,1) {};
        \draw[black, thick] (l\i) -- (\i);
        \draw[black, thick] (r\i) -- (\i);
        \draw[black, thick] (c\i) -- (\i);
        \draw[black, thick] (u\i) -- (\i);
        \draw[black, thick] (\i*1.5-.31,-.47) -- (\i*1.5-.19,-.53);
        \draw[black, thick] (\i*1.5+.31,-.47) -- (\i*1.5+.19,-.53);
        \draw[black, thick] (\i*1.5-.08,-.54) -- (\i*1.5+.08,-.54);

        }
        \foreach \j in {1,...,4}{
        \node[hollow node] at (\j){};
        }
        \node[filled, minimum width = 8pt] (5) at (7.5,0) {};
        \draw[black, thick] (4) -- (5);
        \draw[black, thick] (.75,-.08) -- (.75,.08);
        \draw[black, thick] (6.75,-.08) -- (6.75,.08);
    \end{tikzpicture}
    \caption{A 3-Free Path of Length $5$}
    \label{nfreepath}
\end{figure}
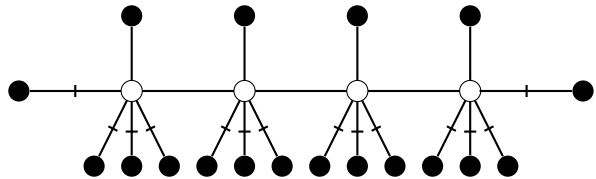

\begin{theorem} \label{npath}
    Let $G$ be a graph on which a vertex-edge marking game, $\mathcal G$, is being played. If there is a round in which there is an $n$-free path, $P$, and it is Bob's turn, then Bob has a strategy to force
    \[ \score(\mathcal G) \geq n+3.\]
\end{theorem}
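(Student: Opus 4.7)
The plan is to generalize the proof of Inequality \eqref{freepath} by inducting on the length $k$ of the $n$-free path $P = v_0 v_1 \cdots v_k$, exploiting the fact that each interior vertex already carries a ``head start'' of $n$ marked outside edges. Throughout, those marked outside edges simply add to every interior vertex's score and need not be managed by Bob; he only has to drive the contribution from path edges (together with one additional outside edge at the very end) up to $3$ to reach the target total of $n+3$.

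For the base case $k=2$, the unique interior vertex $v_1$ is already incident to both marked path edges $v_0 v_1$ and $v_1 v_2$ and to at least $n$ marked outside edges, so its score is already at least $n+2$. Since the definition guarantees at least $n+1$ outside edges at $v_1$, either all of them are marked (in which case the score is already $\ge n+3$ and the round score is already achieved) or at least one is unmarked, and Bob marks it to push the score to $n+3$.

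For the inductive step $k \ge 3$, I distinguish on whether $v_1 v_2$ is marked. If $v_1 v_2$ is already marked, then $v_1$ has score at least $n+2$ and Bob wins in one move exactly as in the base case. Otherwise Bob marks $v_1 v_2$; this brings $v_1$'s score to at least $n+2$ while leaving at least one unmarked outside edge at $v_1$ (for if no such edge existed, $v_1$'s score would already be $\ge n+3$ and we would be done). Alice must then mark $v_1$, for otherwise Bob marks that remaining unmarked outside edge and attains score $n+3$ on $v_1$. Once Alice has marked $v_1$, I would verify that the truncated sequence $v_1 v_2 \cdots v_k$ is again an $n$-free path on Bob's turn: its first and last edges are marked, its new interior vertices $v_2, \ldots, v_{k-1}$ remain unmarked, and for each such $v_i$ the set of edges not in the path at $v_i$ is unchanged, since the only path edge removed, $v_0 v_1$, is not incident to any $v_i$ with $i \ge 2$. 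The inductive hypothesis then yields $\score(\mathcal G) \ge n+3$.

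The main obstacle is the careful bookkeeping at the end of the inductive step: verifying that the shortened path retains the $n$-free property, and in particular that the required count of at least $n+1$ outside edges with at least $n$ of them marked is preserved at each new interior vertex. Everything else is a direct transport of the classical free path argument, uniformly boosted by the constant $n$ contributed by the pre-marked outside edges at every interior vertex.
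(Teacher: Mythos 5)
Your proof is correct and follows essentially the same induction on the path length $k$ as the paper, with the same case split on whether $v_1v_2$ is already marked and the same reduction to the truncated path $v_1\cdots v_k$ after Alice is forced to mark $v_1$. The only (harmless) deviation is that when $v_1v_2$ is already marked you win in one move at $v_1$ rather than invoking the inductive hypothesis on the shorter path, and you spell out the base case and the verification that the truncated path remains $n$-free, which the paper leaves implicit.
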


\begin{proof}
	The proof follows immediately by induction on the length $k$ of $P$. The length $k=2$ case is trivial. For the inductive step, if the edge $v_1 v_2$ is already marked, we have a shorter $n$-path and Bob wins.
If $v_1 v_2$ is unmarked, then Bob marks $v_1 v_2$ in his first move. If Alice does not mark $v_1$, Bob marks the unmarked edge incident to $v_1$ and wins. Otherwise, we have a shorter $n$-path and Bob wins.
\end{proof}

Actually, the existence of an $n$-free path is both sufficient and necessary for $\colve(G) \geq n+4$.

\begin{corollary}
    Let $G$ be a graph. Then
    \[ \colve(G) \geq n+4\]
    if and only if Bob has a strategy able to provide him with having an $n$-free path on one of his turns.
\end{corollary}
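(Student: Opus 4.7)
The plan is to handle the two directions separately. The forward direction ($\Leftarrow$) follows immediately from Theorem \ref{npath}: if Bob has a strategy guaranteeing that on some turn of his the board contains an $n$-free path, he can switch from that turn onward to the strategy in Theorem \ref{npath} and force $\score(\mathcal{G}) \geq n+3$, giving $\colve(G) \geq n+4$.

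For the reverse direction, suppose $\colve(G) \geq n+4$, so Bob has a strategy $\sigma$ forcing $\score(\mathcal{G}) \geq n+3$. The key observation I plan to invoke is a monotonicity fact: for each vertex $v$, the number of marked incident edges is non-decreasing in $r$ and only ever strictly increases on a Bob turn, while $\score(v,r) = 0$ from the round $v$ is marked onward. Consequently, the first round $r$ at which some unmarked vertex $v$ satisfies $\score(v,r) \geq n+3$ must be a round in which Bob has just marked an edge incident to $v$, pushing its count from $n+2$ up to $n+3$.

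I then rewind to the state \emph{at the start} of that Bob turn (i.e., the end of the preceding Alice turn): $v$ is still unmarked, has exactly $n+2$ marked incident edges, and has at least one unmarked incident edge, namely the one Bob is about to mark. Taking $v_1 = v$ together with any two neighbors $v_0,v_2$ reached by marked edges produces a length-$2$ path whose two edges are marked, whose unique interior vertex $v_1$ is unmarked, and whose non-path edges at $v_1$ number at least $n+1$, of which at least $(n+2) - 2 = n$ are marked. This is precisely an $n$-free path, and it is present on one of Bob's turns while he plays $\sigma$.

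The main point to be careful about is insisting that the $n$-free path is witnessed \emph{on} Bob's turn, not at the end of Bob's turn; this is why rewinding one half-round is essential, and it is the only genuine content in the argument. A minor side check that $v_0 \neq v_2$ uses only that the graph is simple and $n+2 \geq 2$, so any two distinct marked incident edges at $v$ lead to distinct endpoints. Beyond this, the proof is a direct unpacking of the definition of $n$-free path against the terminal state of Bob's score-$(n+3)$-forcing strategy, with no induction on $n$ or on path length required.
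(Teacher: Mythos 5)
Your proof is correct, and the ``if'' direction is the same one-line appeal to Theorem \ref{npath} that the paper makes. For the ``only if'' direction, however, you take a genuinely different route. The paper argues from Alice's side: it observes that Alice's canonical defense (mark any unmarked vertex of degree $>n+2$ that has accumulated $n+2$ marked incident edges) can only be defeated if Bob creates a double threat by marking an edge $v_1v_2$ between two unmarked high-degree vertices that each already carry $n+1$ marked incident edges, and this configuration is read off as an $n$-free path of length $3$. You instead argue from Bob's side: run his score-$(n+3)$ strategy, locate the first round in which some unmarked vertex reaches $n+3$ marked incident edges, and rewind half a round to exhibit a length-$2$ $n$-free path centered at that vertex. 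Your version has two small advantages: it produces the minimal witness ($k=2$), and it cleanly handles the quantification over Alice's strategies --- since Bob's winning strategy beats \emph{every} Alice strategy, the rewinding argument applies uniformly to every play, whereas the paper's argument as literally written only analyzes Bob's play against one particular Alice strategy and leaves the reader to supply that step. The paper's version, in exchange, isolates the concrete mechanism (the simultaneous double threat on the two endpoints of a single edge) by which Bob must force the configuration, which is the more useful picture when one is actually hunting for such strategies on a specific lattice. Your side remarks --- that the path must be witnessed at the \emph{start} of Bob's turn, and that $v_0\neq v_2$ because the graph is simple --- are exactly the right details to check, and both are fine.
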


\begin{proof}
	For the only if part, note that Alice must attempt the strategy to leave no unmarked vertices of degree $>n+2$ with $n+2$ marked incident edges at the end of her turn if she wants to avoid the final score $n+3$. This attempt can only fail if Bob has a strategy that allows him to mark the edge $v_1v_2$ between two unmarked vertices $v_1,v_2$ of degree $>n+2$ which have $n+1$ marked incident edges each. However, this means that Bob has a strategy to force an $n$-free path of length $3$ on one of his turns.

The converse direction is immediate from Theorem \ref{npath}.
\end{proof}

The focus of this paper was determining the vertex-edge coloring number for specific triangular graphs. We have a number of conjectures for related results. First, the fact that $\mathcal T$ was $2$-colorable played a major role in the proof of Corollary \ref{Trianlge Lattice Theorem}. We conjecture that, in fact, this was the key necessary hypothesis.

\begin{conj}
For any $2$-colorable planar graph $G$, $\colve(G) \leq 4$.
\end{conj}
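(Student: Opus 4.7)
The plan is to extend Theorem \ref{Main Theorem} by removing the hypothesis that gray faces be triangles. In any $2$-colorable planar graph $G$, each edge still lies on exactly one gray face, so the notion of a ``corresponding face'' survives. The most direct approach I would try is a \emph{virtual triangulation}: for each gray face $F$ of size $k > 3$, insert $k-3$ diagonals to subdivide $F$ into $k-2$ triangles, producing a supergraph $G'$ on the same vertex set as $G$. If $G'$ can be arranged to satisfy the hypotheses of Theorem \ref{Main Theorem}, then Alice's strategy for $G'$ transfers to $G$ verbatim: Bob in the $G$-game never marks a virtual diagonal, so he is a restricted adversary from the $G'$ perspective, and any unmarked vertex with four marked incident $G$-edges would also have four marked incident $G'$-edges, contradicting Alice's $G'$ strategy.

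The first obstacle is constructing such a triangulation. A fan triangulation of a gray face $F$ of size $k$ yields a chain of $k-2$ triangles which, under any proper $2$-coloring of $G'$, must alternate gray and white; both end-triangles border original edges of $F$ and must therefore be gray, forcing $k-2$ to be odd. For even-sized gray faces this fan approach fails outright, and one must explore either more flexible triangulation patterns (for example, zig-zag paths, or jointly triangulating neighboring gray/white faces in order to redistribute parities) or a strengthened version of Theorem \ref{Main Theorem} that tolerates a mix of gray triangles with occasional gray larger faces equipped with additional reactive response rules R1, \ldots, R$k$.

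The most delicate step will be controlling the angle-marking scheme at high-degree vertices, which can occur in $2$-colorable planar graphs (as in the bipyramid over $C_{2n}$, where the apex vertex has degree $2n$). Such a vertex sits at many gray faces at once and must simultaneously satisfy the ``at most two unmarked angles'' condition across all of them, while any generalized response rule must defend against Bob's attacks that combine threats from several gray faces. I expect the hardest part of the conjecture to be finding a globally consistent angle-marking, or, equivalently, a global invariant (perhaps amortized across gray faces via a discharging argument on the face-vertex incidence structure) that Alice's strategy can maintain. Whether such a scheme always exists for $2$-colorable planar graphs is the heart of the conjecture, and this is where I would concentrate the search for either a proof or a counterexample.
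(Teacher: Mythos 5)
This statement is one of the paper's \emph{conjectures}: the authors offer no proof, and explicitly indicate they do not know how to establish it. So there is no argument in the paper to compare yours against, and the relevant question is only whether your proposal closes the gap. It does not. You candidly flag the unresolved points yourself (the parity obstruction for fan triangulations, the high-degree vertices, and the need for a ``globally consistent angle-marking''), and that last item is precisely the content of the conjecture; a proposal that ends by saying the key step is where one should ``concentrate the search for either a proof or a counterexample'' is a research plan, not a proof.

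Beyond that, the central device you propose --- virtually triangulating each large gray face so that Theorem \ref{Main Theorem} applies to the supergraph $G'$ --- runs into an obstruction more basic than the parity issue you identify. Theorem \ref{Main Theorem} requires that \emph{every} edge lie in precisely one gray triangle. If a gray face $F$ is a quadrilateral, any triangulation of $F$ consists of two triangles sharing the diagonal; all four original edges of $F$ must be covered by gray triangles inside $F$ (their other side is a white face of $G$), which forces both triangles gray, and then the diagonal lies in two gray triangles. The same counting problem recurs for larger faces: the gray triangles inside $F$ must cover all $k$ boundary edges while remaining pairwise edge-disjoint, which a triangulation of a $k$-gon into $k-2$ triangles generally cannot accommodate. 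So the reduction to Theorem \ref{Main Theorem} fails already in the simplest non-triangular case, and the proposal would need an essentially different mechanism (your suggested ``rules R1 through R$k$ for larger gray faces'' variant), none of which is carried out. The conjecture remains open.
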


It is natural to try to generalize many of these results to Apollonian networks. Unfortunately, we do not have a sufficiently robust strategy for Alice to permit an inductive application of Theorem \ref{graph extension theorem}. However, we still believe that the upper bound is~$4$.

\begin{conj} \label{conj}
For any Apollonian network $G$, $\colve(G) \leq 4$.
\end{conj}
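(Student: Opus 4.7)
The plan is to proceed by strengthened induction on $|V(G)|$, using the fact that every Apollonian network is a $3$-tree and therefore contains at least one simplicial vertex $v$ of degree exactly $3$ whose removal produces a smaller Apollonian network $G^- = G - v$. Setting $n = 4$ in Theorem \ref{graph extension theorem} aligns cleanly with the target bound $\colve(G) \leq 4$: the first two hypotheses ($\deg(v) < 4$ in $G$ and $\colve(G^-) \leq 4$) are immediate from the construction and the inductive hypothesis, while the third demands that Alice have a strategy on $G^-$ leaving no unmarked vertex with three marked incident edges at the end of her turn. The natural statement to carry through the induction is therefore this strengthened form: every Apollonian network admits such an Alice strategy. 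The base case is $K_4$, which is trivial since $\Delta(K_4) = 3$.

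To propagate the strengthened invariant from $G^-$ to $G$, Alice plays according to her $G^-$-strategy whenever Bob marks an edge of $G^-$, and employs a dedicated local response when Bob marks one of the three new edges $va$, $vb$, or $vc$. A first-pass design is to mark $v$ itself the first time Bob touches a $v$-edge (this kills the $v$-threat permanently, since $\deg_G(v) = 3$) and then, for subsequent Bob moves on a $v$-edge, to treat that play as a ``free'' move in $G^-$, applying Alice's $G^-$-strategy to some naturally chosen companion move (for example, pretending Bob marked a specific edge of the triangle $abc$). One must check that the single turn Alice spends on $v$ itself does not allow any vertex of $G^-$ to accumulate three marked incident edges in the meantime.

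The main obstacle, which is precisely why the authors leave this as a conjecture, is this robustness issue. The $G^-$-strategy coming out of the inductive hypothesis was built assuming Alice answers every one of Bob's moves, and any turn Alice ``donates'' to protect $v$ may invalidate a delicate invariant somewhere in $G^-$. Making the argument go through will likely require one of the following reinforcements: (i) further strengthening the inductive hypothesis to provide Alice with a bounded number of donatable turns at prescribed vertices without losing her invariant, (ii) designing a global Alice strategy driven by the Apollonian construction sequence itself, such as a generation-based priority in which Alice always marks the most recently added unmarked endpoint of Bob's current edge, and verifying directly that it maintains the strengthened invariant under arbitrary subdivision, or (iii) adapting the angle-marking technique of Theorem \ref{Main Theorem} to some non-face-$2$-colorable setting, perhaps using a weighted or oriented variant of marked angles that respects the chordal structure of $3$-trees.

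Identifying an invariant that is simultaneously strong enough to drive the induction and weak enough for Alice to maintain under an arbitrary sequence of face subdivisions is the heart of the problem; I expect any successful proof to hinge on the exact formulation of this invariant rather than on a clever local move.
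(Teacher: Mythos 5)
This statement is Conjecture~\ref{conj}; the paper offers no proof of it, and explicitly states that the authors could not find ``a sufficiently robust strategy for Alice to permit an inductive application of Theorem~\ref{graph extension theorem}.'' Your proposal is likewise not a proof: it is a plan that, to your credit, candidly identifies the same obstruction the authors name and stops there. So there is a genuine gap, and it is worth pinpointing exactly where your induction breaks. To iterate Theorem~\ref{graph extension theorem} along the Apollonian construction sequence, the conclusion you need at each stage is not $\colve(G')\le 4$ but the \emph{strengthened} third hypothesis on $G'$: Alice leaves no unmarked vertex of $G'$ with three marked incident edges at the end of her turn, including vertices of degree exactly $3$. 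The strategy produced in the proof of Theorem~\ref{graph extension theorem} does not deliver this. If Bob marks $va$, $vb$, $vc$ over three rounds, that strategy has Alice mark $a$, then $b$, then $c$, leaving the new vertex $v$ unmarked with all three of its incident edges marked --- harmless for the bound $\colve(G')\le4$ since $\deg_{G'}(v)=3$, but a violation of the invariant you need to pass to the next stage, where a vertex $w$ is inserted into a face containing $v$ and $v$'s degree grows. Your ``first-pass design'' of marking $v$ instead does not repair this: then $a$ carries the marked edge $va$, which the inherited $G^-$-strategy does not count, so $a$ can sit unmarked with two marked edges of $G^-$ plus $va$, again breaking the invariant. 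Either way one of $v$ or $\{a,b,c\}$ is shortchanged, and this is precisely the robustness failure.

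Two smaller points. First, your base case is not quite ``trivial since $\Delta(K_4)=3$'': the degree bound gives $\colve(K_4)\le 4$ immediately, but the strengthened invariant requires Alice to prevent any unmarked degree-$3$ vertex from having all three of its edges marked, which needs a short explicit argument on $K_4$ (it does hold, but not for the reason you give). Second, your options (i)--(iii) for repairing the induction are reasonable research directions, but none is carried out, and option (i) --- an inductive hypothesis granting Alice donatable turns --- faces the quantitative problem that a single face can be subdivided arbitrarily many times, so any fixed budget of donated turns per vertex is eventually exhausted. Until an invariant is exhibited that survives an arbitrary sequence of face subdivisions, the statement remains, as in the paper, a conjecture.
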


It feels as if adapted techniques from \cite{Schnyder} and Inequality \eqref{eqn:outdegreebound} should give results on Apollonian networks. However, it turns out that Inequality \eqref{eqn:outdegreebound} is not, by itself, able to prove Conjecture \ref{conj}. For example, note that if $(V,E)$ is an Apollonian network with $|V|= n$ vertices, then the number of edges is $|E| = 3n-6$.
So for large $n$, $\frac{|E|}{|V|}$ tends to 3 with $\frac{|E|}{|V|}>2$ for $n>6$. Therefore, orienting the edges of the graph such that each vertex has at most two outgoing edges is not possible as the average out-degree equals $\frac{|E|}{|V|}$. With that said, switching to the dual graph on an Apollonian network may be a more useful technique.

%
%
%
%
%
%

\bibliographystyle{plain}
\bibliography{4prong}

\begin{thebibliography}{1}

\bibitem{suffice}
K.~Appel and W.~Haken.
\newblock The four color proof suffices.
\newblock {\em Math. Intelligencer}, 8:10--20, 1986.

\bibitem{bartniki}
T.~Bartnicki, B.~Bre\v{s}ar, J.~Grytczuk, M.~Kov\v{s}e, Z.~Miechowicz, and
  I.~Peterin.
\newblock Game chromatic number of {C}artesian product graphs.
\newblock {\em Electron. J. Combin.}, 15:R72, 2008.

\bibitem{complexity}
H.~L. Bodlaender.
\newblock On the complexity of some coloring games.
\newblock {\em Internat. J. Found. Comput. Sci.}, 2:133--147, 1991.

\bibitem{bresar}
B.~Bre\v{s}ar, N.~Gastineau, T.~Gologranc, and O.~Togni.
\newblock On a vertex-edge marking game on graphs.
\newblock {\em Ann. Comb.}, 25:179--194, 2021.

\bibitem{gardner}
M.~Gardner.
\newblock {\em The Last Recreations}.
\newblock Copernicus, Springer, New York, 1997.

\bibitem{mckay}
B.~D. McKay.
\newblock A note on the history of the four-colour conjecture.
\newblock {\em J. Graph Theory}, 72:361--363, 2013.

\bibitem{Schnyder}
W.~Schnyder.
\newblock Embedding planar graphs on the grid.
\newblock {\em Proc. 1st Annual ACM-SIAM Symposium on Discrete Algorithms},
  pages 138--147, 1994.

\end{thebibliography}

\end{document}